\DeclareMathOperator{\Ext}{Ext}
\DeclareMathOperator{\Hom}{Hom}
\DeclareMathOperator{\Image}{Image}
\DeclareMathOperator{\Ker}{Ker}
\DeclareMathOperator{\reg}{reg}
\DeclareMathOperator{\Tor}{Tor}
\renewcommand{\ge}{\geqslant}
\renewcommand{\le}{\leqslant}
\newcommand{\bN}{\mathbb{N}}
\newcommand{\bZ}{\mathbb{Z}}
\newcommand{\cN}{\mathcal{N}}
\newcommand{\fg}{finitely generated }
\newcommand{\fm}{\mathfrak{m}}
\renewcommand{\iff}{if and only if }
\newcommand{\lra}{\longrightarrow}
\newcommand{\sR}{\mathscr{R}}
\newcommand{\sS}{\mathscr{S}}
\theoremstyle{plain}
\newtheorem{theorem}{Theorem}[section]
\newtheorem{lemma}[theorem]{Lemma}
\newtheorem{corollary}[theorem]{Corollary}
\theoremstyle{definition}
\newtheorem{example}[theorem]{Example}
\newtheorem{question}[theorem]{Question}
\newtheorem{hypothesis}[theorem]{Hypothesis}
\newtheorem{para}[theorem]{}
\theoremstyle{remark}
\newtheorem{remark}[theorem]{Remark}
\numberwithin{equation}{theorem}
\title[An asymptotic bound for Castelnuovo-Mumford regularity]
{An asymptotic bound for Castelnuovo-Mumford regularity of certain Ext modules over graded complete intersection rings}
\author[Dipankar Ghosh]{Dipankar Ghosh}
\address{Chennai Mathematical Institute, H1, SIPCOT IT Park, Siruseri, Kelambakkam, Chennai 603103, Tamil Nadu, India}
\email{dghosh@cmi.ac.in}
\author[Tony J. Puthenpurakal]{Tony J. Puthenpurakal}
\address{Department of Mathematics, Indian Institute of Technology Bombay, Powai, Mumbai 400076, India}
\email{tputhen@math.iitb.ac.in}
\date{June 27, 2018}
\subjclass[2010]{Primary 13D07; Secondary 13D02, 13A30}
\keywords{Complete intersection rings; Multigraded rings and modules; Castelnuovo-Mumford regularity; Ext; Eisenbud operators}
\begin{document}


\begin{abstract}
	Set $ A := Q/({\bf z}) $, where $ Q $ is a polynomial ring over a field, and $ {\bf z} = z_1,\ldots,z_c $ is a homogeneous  $ Q $-regular sequence. Let $ M $ and $ N $ be finitely generated graded $ A $-modules, and $ I $ be a homogeneous ideal of $ A $. We show that
	\begin{enumerate}
		\item 
		$ 
		\reg\left( \Ext_A^{i}(M, I^nN) \right) \le \rho_N(I) \cdot n - f \cdot \left\lfloor \frac{i}{2} \right\rfloor + b \quad \mbox{for all } i, n \ge 0 ,
		$
		\item 
		$ 
		\reg\left( \Ext_A^{i}(M,N/I^nN) \right) \le \rho_N(I) \cdot n - f \cdot \left\lfloor \frac{i}{2} \right\rfloor + b' \quad \mbox{for all } i, n \ge 0,
		$
	\end{enumerate}
	where $ b $ and $ b' $ are some constants, $ f := \min\{ \deg(z_j) : 1 \le j \le c \} $, and $ \rho_N(I) $ is an invariant defined in terms of reduction ideals of $ I $ with respect to $ N $. There are explicit examples which show that these inequalities are sharp.
\end{abstract}

\maketitle

\section{Introduction}\label{sec:intro}
 
 Castelnuovo-Mumford regularity is a kind of universal bound for important invariants such as the maximum degree of the minimal generators of syzygy modules and the maximum non-vanishing degree of the local cohomology modules. One has often tried to obtain upper bounds for the regularity in terms of simpler invariants. In this article, we provide such bounds for Ext modules involving powers of ideals over complete intersection rings.
 
 Let $ Q = K[X_1,\ldots, X_d] $ be a polynomial ring over a field $ K $ with its usual grading, i.e., each $ X_i $ has degree $ 1 $, and let $\fm$ denote the maximal homogeneous ideal of $ Q $. Let $ N $ be a finitely generated non-zero graded $ Q $-module. The Castelnuovo-Mumford regularity of $ N $, denoted by $ \reg(N) $,  is defined to be the least integer $ m $ so that, for every $ j $, the $j$th syzygy of $ N $ is generated in degrees $ \le m + j $.
 
 For a homogeneous ideal $ I $, in $ Q $, Kodiyalam \cite[Thm.~5]{Kod00} and Cutkosky, Herzog and Trung \cite[Thm.~1.1]{CHT99} independently proved that $ \reg(I^n) = a n + b $ for all $ n \gg 0 $, where $ a $ and $ b $ are constants.  The number $ a $ can be determined using a reduction of $ I $.  Kodiyalam also proved that for $ N $ and $ Q $ as above, the regularity of the $ n $th symmetric power of $ N $ and related modules are bounded above by linear functions of $ n $ with leading coefficient at most the maximal degree of a minimal generator of $ N $; see \cite[Cor.~2]{Kod00}.
 
 For many ideals, in \cite[Cor.~4.4]{Gho16a}, the first author showed that for ideals $ I_1,\ldots, I_t $, there exists positive integer $ a $ such that 
 \[
 	\reg(I_1^{n_1}\cdots I_t^{n_t}) \le (n_1 + \cdots + n_t) a \ \quad \text{for all } \ n_1,\ldots,n_t \ge 0.
 \]
 A natural question is whether in the multigraded case, the regularity (for many ideals) is asymptotically linear.
 See \cite[3.1]{BC17} for an example where this fails.
 
 Let $ \mathfrak{a} $ be a homogeneous ideal of $ Q $. Set $ A := Q/\mathfrak{a} $. If $ M $ is a finitely generated graded $ A $-module, then we can simply set $ \reg_A(M) := \reg_Q(M) $. There is also an intrinsic definition for regularity of graded modules over $ A $ which coincides with $ \reg_Q(M) $ (see \ref{reg-over-Q-or-A}). So we simply denote it by $ \reg(M) $.
 
 If $ A $ is a singular ring, then there exists phenomena which does not arise in the polynomial ring case. For instance, given finitely generated graded modules $ M $ and $ N $, it is unknown whether $ \reg\left( \Tor^A_i(M, N) \right) $ and $ \reg \left( \Ext^i_A(M, N) \right) $ are bounded above by linear functions of $ i $. However, over polynomial ring $ Q = K[X_1,\ldots, X_d] $, the following bounds are known:
 \begin{enumerate}
 	\item \cite[Cor.~3.1]{EHU06} If $ \dim(\Tor_1^Q(M, N)) \le 1 $, then
 	\[
 		\reg\big( \Tor_i^Q(M, N) \big) \le i + \reg(M) + \reg(N) \quad \mbox{for every } 0 \le i \le d.
 	\]
 	\item \cite[Thm.~4.6]{CD08} If $ \dim (M \otimes_Q N) \le 1 $, then
 	\[
 		\max_{0 \le i \le d} \left\{ \reg\big( \Ext_Q^i(M,N) \big) + i \right\} = \reg(N) - \mathrm{indeg}(M),
 	\]
 	where $ \mathrm{indeg}(M) := \inf\{ n \in \mathbb{Z} : M_n \neq 0 \} $.
 	\item \cite[Thm.~2.4(2) and 3.5]{CHH11} An upper bound of $ \reg( \Ext_Q^i(M,N) ) + i $ is given in terms of certain invariants of $ M $ and $ N $.
 \end{enumerate}
 On the other hand, if $ I $ is a homogeneous ideal of $ A $, then it follows from the work of Trung and Wang \cite[Thm.~2.2]{TW05} that for every fixed $ i $, the numbers
 \begin{align*}
  \reg \left( \Tor^A_i(M, I^n N) \right), &\quad \reg \left( \Tor^A_i(M, N/I^n N) \right) \\
  \reg \left( \Ext^i_A(M, I^n N) \right) \quad &\mbox{and} \quad \reg \left( \Ext^i_A(M, N/I^n N) \right)
 \end{align*}
 are bounded above by linear functions of $ n $. The purpose of the article is to show over complete intersection rings, what happens when both $ i $ and $ n $ vary. We prove that if $ A = Q/(z_1,\ldots,z_c) $ is a graded complete intersection, and $ I $ is a homogeneous ideal of $ A $, then for every $ l \in \{ 0,1 \} $, we have the following bounds:
 \begin{enumerate}
 	\item $ \reg\left( \Ext_A^{2i+l}(M, I^nN) \right) \le \rho_N(I) \cdot n - f \cdot i + b_{l} $ \quad for all $ i, n \ge 0 $,
 	\item $ \reg\left( \Ext_A^{2i+l}(M,N/I^nN) \right) \le \rho_N(I) \cdot n - f \cdot i + b'_{l} $ \quad for all $ i, n \ge 0 $,
 \end{enumerate}
 where $ b_{l} $ and $ b'_{l} $ are some constants, $ f := \min\{ \deg(z_j) : 1 \le j \le c \} $, and $ \rho_N(I) $ is an invariant defined in terms of reduction ideals of $ I $ with respect to $ N $; see Theorem~\ref{thm:main}. We also give explicit examples which show that these inequalities are sharp.
 
 We now describe in brief the contents of this article. In Section~\ref{sec:preli}, we give notations and preliminaries on Castelnuovo-Mumford regularity. The main technique in this article is to analyze the cohomological operators (due to Eisenbud) in the graded setup, and how these operators provide multigraded module structures on Ext. These structures are described in Section~\ref{sec: module structure on Ext}. We deduce our main results from a theorem over arbitrary trigraded setup which is proved in Section~\ref{sec: linear bounds in multigraded modules}. Our main results are shown in Section~\ref{sec: regularity of Exts}. Finally, in Section~\ref{sec:examples}, we give a few examples.
 
\section{Notations and preliminaries on regularity}\label{sec:preli}
 
 \begin{para}\label{notations}
  Throughout this article, all rings are assumed to be commutative Noetherian rings with identity. We denote the set of all non-negative integers by $ \bN $, and the set of all integers by $ \bZ $. Let $ t $ be a fixed positive integer.
  For each $ 1 \le u \le t $, $ \underline{e}^u $ denotes the $ u $th standard basis element of $ \bZ^t $. We set $ \underline{0} := (0,\ldots,0) \in \bZ^t $.
  Let $ R $ be a $ \bZ^t $-graded ring, and $ L $ be a $ \bZ^t $-graded $ R $-module. By $ L_{\underline{n}} $, we always mean the $ \underline{n} $th graded component of $ L $.
%
An $ \bN^t $-graded ring $ R $ is said to be {\it standard $ \bN^t $-graded} if $ R $ is generated in total degree one, i.e., if $ R = R_{\underline{0}}[R_{\underline{e}^1},\ldots,R_{\underline{e}^t}] $.
\end{para}
 
 \begin{para}\label{reg-coho}
 	Let $A = A_0[x_1,\ldots,x_d]$ be a standard $\mathbb{N}$-graded ring, i.e., $ \deg(A_0) = 0 $, and $ \deg(x_i) = 1 $ for $ 1 \le i \le d $. Set $ A_{+} := \bigoplus_{n \ge 1}A_n $. Let $M$ be a finitely generated $ \bZ $-graded $A$-module. For every $i \ge 0$, suppose $H_{A_{+}}^i(M)$ denotes the $i$th local cohomology module of $M$ with respect to $A_{+}$. We set $ a_i(M) := \max\{ \mu : H_{A_{+}}^i(M)_{\mu} \neq 0 \} $ if $H_{A_{+}}^i(M) \neq 0$, and $a_i(M) := -\infty$ otherwise. The {\it Castelnuovo-Mumford regularity} of $M$ is defined to be $ \reg(M) := \max\left\{ a_i(M) + i : i \ge 0 \right\} $. Note that $ M = 0 $ \iff $ \reg(M) = -\infty $.
 \end{para}
 
 For a given short exact sequence, by considering the corresponding long exact sequence of local cohomology modules, one obtains the following well-known result. 
 
 \begin{lemma}\label{lemma: properties of regularity}
 	Let $A$ be as above. If $0 \rightarrow M' \rightarrow M \rightarrow M'' \rightarrow 0$ is a short exact
 	sequence of finitely generated $\mathbb{Z}$-graded $A$-modules, then we have the following:
 	\begin{enumerate}[{\rm (i)}]
 		\item $\reg(M') \le \max\{ \reg(M), \reg(M'') + 1\}$.
 		\item $\reg(M) \le \max\{ \reg(M'), \reg(M'')\}$.
 		\item $\reg(M'') \le \max\{ \reg(M') - 1, \reg(M)\}$.
 		\item If $ M' $ has finite length, then $\reg(M) = \max\{ \reg(M'), \reg(M'') \}$.
 	\end{enumerate}
 \end{lemma}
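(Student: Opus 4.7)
The plan is to read off everything from the long exact sequence of local cohomology modules with respect to $A_{+}$ associated to the short exact sequence $0 \to M' \to M \to M'' \to 0$, namely
\[
  \cdots \to H_{A_+}^{i-1}(M'') \to H_{A_+}^{i}(M') \to H_{A_+}^{i}(M) \to H_{A_+}^{i}(M'') \to H_{A_+}^{i+1}(M') \to \cdots ,
\]
together with the elementary observation that if $0 \to X \to Y \to Z \to 0$ is an exact sequence of graded $A$-modules, then $a(Y) \le \max\{a(X),a(Z)\}$, where $a(L) := \sup\{n : L_n \ne 0\}$.

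For each of parts (i)--(iii), I would isolate the three-term exact piece that contains the relevant middle term. For (i), the piece $H_{A_+}^{i-1}(M'') \to H_{A_+}^{i}(M') \to H_{A_+}^{i}(M)$ yields $a_i(M') \le \max\{a_{i-1}(M''),\, a_i(M)\}$, hence $a_i(M') + i \le \max\{(a_{i-1}(M'') + (i-1)) + 1,\, a_i(M) + i\} \le \max\{\reg(M'')+1,\, \reg(M)\}$; taking the maximum over $i$ gives the claim. Part (ii) is analogous, using $H_{A_+}^{i}(M') \to H_{A_+}^{i}(M) \to H_{A_+}^{i}(M'')$. For (iii), use $H_{A_+}^{i}(M) \to H_{A_+}^{i}(M'') \to H_{A_+}^{i+1}(M')$ and the identity $a_{i+1}(M') + i = (a_{i+1}(M') + (i+1)) - 1 \le \reg(M')-1$.

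For (iv), the extra input is that a finite length graded module $M'$ satisfies $H_{A_+}^{0}(M') = M'$ and $H_{A_+}^{i}(M') = 0$ for all $i \ge 1$. Consequently the long exact sequence degenerates to a short exact sequence $0 \to M' \to H_{A_+}^{0}(M) \to H_{A_+}^{0}(M'') \to 0$ and to isomorphisms $H_{A_+}^{i}(M) \cong H_{A_+}^{i}(M'')$ for $i \ge 1$. From the short exact sequence one reads off $a_0(M) = \max\{a_0(M'), a_0(M'')\}$, while the isomorphisms give $a_i(M) = a_i(M'')$ for $i \ge 1$; combining these and using $\reg(M') = a_0(M')$ produces $\reg(M) = \max\{\reg(M'), \reg(M'')\}$.

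There is no real obstacle here; the only care needed is to track the index shifts correctly so the $+1$ and $-1$ in (i) and (iii) appear in the right places, and to remember that the regularity is a maximum over all $i \ge 0$, so it suffices to prove each inequality termwise in $i$ and then pass to the supremum.
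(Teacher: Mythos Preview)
Your proposal is correct and follows exactly the approach the paper indicates: the paper does not give a detailed proof but simply remarks that the lemma is well known and follows from the long exact sequence of local cohomology modules associated to the short exact sequence. Your write-up supplies precisely those details, with the index shifts handled correctly and the finite-length case treated via the vanishing of $H_{A_+}^i(M')$ for $i\ge 1$.
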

 
 \begin{para}\label{reg-syz}
	Let $ Q = K[X_1,\ldots,X_d] $ be a polynomial ring in $ d $ variables over a field $ K $ with the usual grading, i.e., $ \deg(X_i) = 1 $ for $ 1 \le i \le d $. Let $M$ be a finitely generated $ \bZ $-graded $Q$-module. In this setup, $ \reg(M) $ can also be expressed in terms of degrees of homogeneous generators for syzygies of $ M $ over $ Q $. Let
	\begin{equation}\label{reso}
		0 \to F_{d'} \to F_{d'-1} \to \cdots \to F_1 \to F_0 \to 0
	\end{equation}
	be a finite graded free resolution of $ M $ over $ Q $, where $ F_i = \bigoplus_{j \in \mathbb{Z}} Q(-j)^{\beta_{ij}} $ for $ 0 \le i \le d' $. (Note that $ \beta_{ij} = 0 $ for all but finitely many $ j $). It is well known that
	\begin{equation}\label{reg-via-syz}
		\reg(M) \le \max \{ j - i : \beta_{ij} \neq 0, \mbox{ where }  0 \le i \le d' \mbox{ and } j \in \mathbb{Z} \}
	\end{equation}
	with equality if the resolution \eqref{reso} is minimal; see, e.g., \cite[16.3.7]{BS13}.
 \end{para}
 
\begin{para}\label{reg-over-Q-or-A}
	Let $A = K[x_1,\ldots,x_d]$ be a standard $\mathbb{N}$-graded ring over a field $ K $. Set $ A_{+} := \bigoplus_{n \ge 1} A_n $. Let $ Q $ be as in \ref{reg-syz}. Then $ A \cong Q/\mathfrak{a} $ for some homogeneous ideal $ \mathfrak{a} $ of $ Q $. Let $M$ be a finitely generated $ \bZ $-graded $A$-module (hence $ Q $-module too). It is well known that $ H_{Q_+}^i (M) \cong H_{A_+}^i (M) $ for every $ i \ge 0 $; see, e.g., \cite[14.1.7(ii)]{BS13}.
	Hence $ \reg_Q(M) = \reg_A(M) $, where $ \reg_A(M) $ is the regularity of $ M $ as an $ A $-module. So we simply denote this by $ \reg(M) $.
\end{para}

\section{Multigraded module structure on Ext}\label{sec: module structure on Ext}
 
 We work with the following setup.
 
 \begin{hypothesis}\label{setup0}
 	Let $ Q = K[X_1,\ldots,X_d] $ be a polynomial ring over a field $ K $, where $ \deg(X_i) = 1 $ for $ 1 \le i \le d $. Set $ A := Q/({\bf z}) $, where $ {\bf z}  = z_1,\ldots,z_c $ is a homogeneous $ Q $-regular sequence. Then $ A $ is a standard $ \bN $-graded ring. We write $ A = K[x_1,\ldots,x_d] $, where $ x_i $ is the residue of $ X_i $, i.e., $ \deg(x_i) = 1 $ for $ 1 \le i \le d $. Let $ J $ be an ideal of $ A $ generated by homogeneous elements $ y_1,\ldots,y_b $ of degree $ d_1,\ldots,d_b $ respectively. Set $ f_j := \deg(z_j) $ for $ 1 \le j \le c $. Without loss of generality, we may assume that $ f_1 \le \cdots \le f_c $ and $ d_1 \ge \cdots \ge d_b $. Let $ M $ and $ N $ be \fg $ \bZ $-graded $ A $-modules.
 \end{hypothesis}
 
 \begin{para}[Eisenbud operators]\label{para:mod-struc-1}
 	We need Eisenbud operators (\cite[Section~1]{Eis80}) in the graded setup. By a {\it homogeneous homomorphism}, we mean a graded homomorphism of degree zero. Let $ \mathbb{F} : ~ \cdots \rightarrow F_n \rightarrow \cdots \rightarrow F_1\rightarrow F_0 \rightarrow 0 $ be a $ \bZ $-graded free resolution of $M$ over $A$. In view of the construction of the Eisenbud operators, one may choose homogeneous $ A $-module homomorphisms $ t'_j : F_{i+2} \to F_i(-f_j) $ (for every $ i $) corresponding to $ z_j $; see \cite[page~39, (b)]{Eis80}. Note that if $ F_i = A^{b_i} $, then $ F_i(-f) \cong A(-f)^{b_i} $. We set
 	\[
 		\mathbb{F}^{(j)} : ~ \cdots \longrightarrow F_n(-f_j) \longrightarrow \cdots \longrightarrow F_1(-f_j) \longrightarrow F_0(-f_j) \longrightarrow 0
 	\]
 	for $ 1 \le j \le c $. Thus the Eisenbud operators corresponding to $ {\bf z}  = z_1,\ldots,z_c $ are given by $ t'_j : \mathbb{F}(+2) \rightarrow \mathbb{F}^{(j)} $ ($ 1 \le j \le c $), where the complex $ \mathbb{F}(+2) $ is same as $ \mathbb{F} $ but the degree is shifted by $ +2 $, i.e., $ \mathbb{F}(+2)_n = F_{n+2} $ for all $ n $. The homogeneous chain maps $t'_j$ are determined uniquely up to 	homotopy; \cite[1.4]{Eis80}. Therefore the maps
 	\[
 		\Hom_A(t'_j,N) : \Hom_A(\mathbb{F}^{(j)},N) \longrightarrow \Hom_A(\mathbb{F}(+2),N)
 	\]
 	induce well-defined homogeneous $ A $-module homomorphisms
 	\begin{equation}\label{Eis-op}
 		t_j : \Ext_A^i(M,N) \longrightarrow \Ext_A^{i+2}(M,N)(-f_j) \quad \mbox{for all $ i \ge 0 $ and $1 \le j \le c$}
 	\end{equation}
 	It is shown in \cite[1.5]{Eis80} that the chain maps $t'_j$ ($ 1 \le j \le c $) commute up to homotopy. Thus
 	\[
 		\Ext_A^{\star}(M,N) := \bigoplus_{i \ge 0} \Ext_A^i(M,N)
 	\]
 	turns into a graded $\mathscr{T} := A[t_1,\ldots,t_c]$-module, where $\mathscr{T}$ is the graded polynomial ring  over $A$ in the {\it cohomology operators} $t_j$ with $\deg(t_j) = 2$ for $1 \le j \le c$. These structures depend only on ${\bf z}$, are natural in both module arguments and commute with the connecting maps induced by short exact sequences.
 \end{para}
 
 \begin{para}\label{para:Gulliksen}
 	In \cite[Thm.~3.1]{Gul74}, Gulliksen proved that $ \Ext_A^{\star}(M,N) $ is finitely generated
 	over $ A[t_1,\ldots,t_c] $.
 \end{para}
 
 \begin{para}[Bigraded module structure on Ext]\label{para:mod-struc-2}
 	Suppose $\mathscr{R}(J)$ denotes the {\it Rees ring} $\bigoplus_{n \ge 0} J^n X^n$ associated to $ J $. We may consider $\mathscr{R}(J)$ as a subring of the polynomial ring $A[X]$. Let $\mathcal{N} = \bigoplus_{n \ge 0} N_n$ be a graded $\mathscr{R}(J)$-module such that $ N_n $ is a $ \bZ $-graded $ A $-module for every $ n \in \bN $. Let $ y \in J^sX^s $ for some $ s \ge 0 $. Furthermore, assume that $ y $ (as an element of $ J^s \subseteq A $) is homogeneous of degree $ d_y $. Then there are homogeneous $ A $-module homomorphisms $ N_n \stackrel{y\cdot}{\longrightarrow} N_{n+s}(d_y) $ ($ n \ge 0 $) given by multiplication with $ y $. By applying $\Hom_A(\mathbb{F},-)$ on these maps, and using the naturality of the Eisenbud operators $t'_j$, we obtain the following commutative diagram of cochain complexes:
 	\[
 		\xymatrixrowsep{6mm} \xymatrixcolsep{9mm}
 		\xymatrix{
 			\Hom_A\left(\mathbb{F}^{(j)},N_n\right) \ar[d]^{y} \ar[r]^{t'_j} &\Hom_A(\mathbb{F}(+2),N_n) \ar[d]^{y} \\
 			\Hom_A\left(\mathbb{F}^{(j)},N_{n+s}(d_y)\right) \ar[r]^{t'_j} &\Hom_A\big(\mathbb{F}(+2),N_{n+s}(d_y)\big).
 		}
 	\]
 	Taking cohomology, we get the following commutative diagrams of homogeneous $A$-module homomorphisms:
 	\begin{equation}\label{action}
 		\xymatrixrowsep{6mm} \xymatrixcolsep{12mm}
 		\xymatrix{
 			\Ext_A^i(M,N_n) \ar[d]^{y} \ar[r]^{t_j} & \Ext_A^{i+2}(M,N_n)(-f_j) \ar[d]^{y} \\
 			\Ext_A^i(M,N_{n+s})(d_y) \ar[r]^{t_j} & \Ext_A^{i+2}(M,N_{n+s})(d_y - f_j)
 		}
 	\end{equation}
 	for all $ i, n \ge 0 $ and $ 1 \le j \le c $. Thus
 	\begin{equation}\label{bigrad-mod}
 		\mathscr{E}(\mathcal{N}) := \bigoplus_{i \ge 0} \bigoplus_{n \ge 0} \Ext_A^i(M,N_n)
 	\end{equation}
 	turns into a bigraded module over $\mathscr{S}_J := \mathscr{R}(J)[t_1,\ldots,t_c]$, where we set $\deg(t_j) = (2,0)$ for all $1 \le j \le c$, and $\deg(J^s X^s) = (0,s)$ for $s \ge 0$. In \ref{para:Z3-grad-ring}, we show that $ \mathscr{E}(\mathcal{N}) $ has a $ \bZ^3 $-graded module structure.
 \end{para}
 
 \begin{para}\label{para:mod-struc-3}
 	Let $ I $ be a homogeneous ideal of $ A $. Suppose $ J $ is an {\it $ N $-reduction} of $ I $, i.e., $ J \subseteq I $ and $ I^{n+1} N = J I^n N $ for some $ n \ge 0 $. Hence the Rees module $\mathscr{R}(I,N) = \bigoplus_{n \ge 0} I^n N$ is a finitely generated graded module over $\mathscr{R}(J)$. Note that $N[X] = N \otimes_A A[X]$ is a graded module over $A[X]$. Since $\mathscr{R}(J)$ is a graded subring of $A[X]$, we may consider $N[X]$ as a graded $\mathscr{R}(J)$-module whose $ n $th graded component is isomorphic to $ N $ for every $ n \ge 0 $. Therefore, by setting $ \cN := N[X] $, in view of \ref{para:mod-struc-2}, we obtain that
 	\begin{equation}
	 	\mathscr{E}(\mathcal{N}) = \bigoplus_{i \ge 0} \bigoplus_{n \ge 0} \Ext_A^i(M,N)
 	\end{equation}
 	is a bigraded $ \mathscr{S}_J $-module.	Moreover $ \mathcal{L}_I := \bigoplus_{n \ge 0}(N/I^{n+1}N) $ is a graded $ \mathscr{R}(J) $-module, where the graded structure is induced by the exact sequence
 	\begin{equation}\label{bigrad-mod-N}
 		0 \longrightarrow \mathscr{R}(I,N) \longrightarrow N[X] \longrightarrow \mathcal{L}_I(-1) \longrightarrow 0.
 	\end{equation}
 	Thus, by the observations made in \ref{para:mod-struc-2}, we have that
 	\begin{equation}\label{bigrad-mod-N/I}
 		\mathscr{E}(\mathcal{L}_I) = \bigoplus_{i \ge 0} \bigoplus_{n \ge 0} \Ext_A^i(M,N/I^{n+1}N)
 	\end{equation}
 	is a bigraded module over $ \mathscr{S}_J = \mathscr{R}(J)[t_1,\ldots,t_c] $.
 \end{para}
  
 \begin{hypothesis}\label{setup1}
 	Along with Hypothesis~\ref{setup0}, assume that $ \mathcal{N} = \bigoplus_{n \ge 0} N_n $ is a \fg graded $ \mathscr{R}(J) $-module such that $ N_n $ is a $ \bZ $-graded $ A $-module for every $ n \in \bN $.
 	(Note that each $ N_n $ is \fg as an $ A $-module).
 \end{hypothesis}

 One of the main ingredients we use in this article is the following finiteness result.
 
 \begin{theorem}\cite[Thm.~1.1]{Put13}\label{thm:fin-gen}
 	With {\rm Hypothesis~\ref{setup1}},
 	\[
 		\mathscr{E}(\mathcal{N}) = \bigoplus_{i \ge 0} \bigoplus_{n \ge 0} \Ext_A^i(M,N_n)
 	\]
 	is a finitely generated bigraded module over $\mathscr{S}_J = \mathscr{R}(J)[t_1,\ldots,t_c]$.
 \end{theorem}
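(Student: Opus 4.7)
The plan is to exploit the Noetherianity of the bigraded ring $\mathscr{S}_J = \mathscr{R}(J)[t_1,\ldots,t_c]$ and reduce to Gulliksen's theorem (\ref{para:Gulliksen}) via Noetherian induction on the $\mathscr{R}(J)$-module $\mathcal{N}$.

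The opening observation is that any short exact sequence $0 \to \mathcal{N}' \to \mathcal{N} \to \mathcal{N}'' \to 0$ of finitely generated graded $\mathscr{R}(J)$-modules yields, by the naturality of the Eisenbud operators (\ref{para:mod-struc-1}) and the commutative diagrams \eqref{action}, a long exact sequence of $\Ext_A^{\bullet}(M,-)$ modules that is in fact a long exact sequence of bigraded $\mathscr{S}_J$-modules. Hence the class
\[
\mathcal{C} := \{\mathcal{N} : \mathscr{E}(\mathcal{N}) \text{ is finitely generated over } \mathscr{S}_J\}
\]
is closed under extensions and graded shifts. If some $\mathcal{N}$ fails to be in $\mathcal{C}$, Noetherianity of $\mathscr{R}(J)$ lets me pick a graded submodule $\mathcal{N}_0 \subseteq \mathcal{N}$ maximal with $\mathcal{N}/\mathcal{N}_0 \notin \mathcal{C}$, and then replace $\mathcal{N}$ by $\mathcal{N}/\mathcal{N}_0$; by maximality, every nonzero proper quotient of this new $\mathcal{N}$ lies in $\mathcal{C}$.

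By the standard graded prime-filtration theorem, every nonzero finitely generated graded $\mathscr{R}(J)$-module admits a nonzero graded submodule of the form $(\mathscr{R}(J)/\mathfrak{p})(-\underline{a})$ for some graded prime ideal $\mathfrak{p}$ of $\mathscr{R}(J)$ and some shift $\underline{a}$. Applied to the $\mathcal{N}$ of the previous paragraph, combined with extension-closure of $\mathcal{C}$, this reduces the problem to showing $\mathscr{R}(J)/\mathfrak{p} \in \mathcal{C}$ for every graded prime $\mathfrak{p}$ of $\mathscr{R}(J)$.

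The base case is the main technical obstacle. For $\mathfrak{p} = 0$ (i.e., $\mathcal{N} = \mathscr{R}(J)$, with $N_n = J^n$), my plan is to combine Gulliksen's theorem, which supplies a finite set of generators of $\Ext_A^{\bullet}(M,A)$ over $A[t_1,\ldots,t_c]$, with the short exact sequences $0 \to J^n \to A \to A/J^n \to 0$ and with the $\mathscr{R}(J)$-action on $\mathscr{E}(\mathscr{R}(J))$, to propagate a finite generating set from low Rees degrees across all Rees degrees; an analogous argument handles $N[X]$ in the constant case. For nonzero $\mathfrak{p}$, one further reduces via a homogeneous annihilator in $\mathfrak{p}$ and the dimension/filtration theory of $\mathscr{R}(J)/\mathfrak{p}$, together with the induction hypothesis on proper quotients. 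Throughout, the commutativity of the diagrams \eqref{action} is what guarantees that every map in the long exact sequences is $\mathscr{S}_J$-linear; this is the structural backbone of the entire induction, without which the link from Gulliksen's single-module theorem to the bigraded statement could not be forged.
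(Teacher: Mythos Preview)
The paper does not prove this theorem; it is quoted verbatim from \cite[Thm.~1.1]{Put13} and used as a black box. So there is no in-paper argument to compare against, and your proposal must be judged on its own merits.

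Your Noetherian-induction framework (extension closure of $\mathcal{C}$, reduction to a minimal criminal, prime filtration reducing to $\mathcal{N}=\mathscr{R}(J)/\mathfrak{p}$) is the right scaffolding, and it is indeed how the original proof in \cite{Put13} is organized. The gap is in the endgame, where the actual content of the theorem lives.

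First, your dichotomy is inverted. The genuine base case is not $\mathfrak{p}=0$ but $\mathfrak{p}\supseteq \mathscr{R}(J)_{+}$: then $(\mathscr{R}(J)/\mathfrak{p})_n=0$ for all $n>0$, so $\mathscr{E}(\mathscr{R}(J)/\mathfrak{p})$ is concentrated in a single Rees degree and Gulliksen's theorem (\ref{para:Gulliksen}) finishes it immediately. The case $\mathfrak{p}=0$ (when it is even prime) is the \emph{hardest} case, not the starting point. Second, your plan for $\mathfrak{p}=0$ via the sequences $0\to J^n\to A\to A/J^n\to 0$ cannot work as written: the middle term assembles to $A[X]$, which is \emph{not} a finitely generated $\mathscr{R}(J)$-module unless $J=A$, so you have no control over $\mathscr{E}(A[X])$ and no way to ``propagate'' generators. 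Third, for nonzero $\mathfrak{p}$ you propose to use ``a homogeneous annihilator in $\mathfrak{p}$'', but an element of $\mathfrak{p}$ acts as zero on $\mathscr{R}(J)/\mathfrak{p}$ and yields nothing. What is needed is the opposite: when $\mathfrak{p}\not\supseteq \mathscr{R}(J)_{+}$, choose a homogeneous $y\in \mathscr{R}(J)_{+}\smallsetminus\mathfrak{p}$ of Rees degree $s>0$; it is a nonzerodivisor on $\mathscr{R}(J)/\mathfrak{p}$ and gives
\[
0 \longrightarrow (\mathscr{R}(J)/\mathfrak{p})(-s) \xrightarrow{\ y\ } \mathscr{R}(J)/\mathfrak{p} \longrightarrow \mathscr{R}(J)/(\mathfrak{p}+(y)) \longrightarrow 0.
\]
By the induction hypothesis (the quotient has strictly larger annihilator), $\mathscr{E}\big(\mathscr{R}(J)/(\mathfrak{p}+(y))\big)$ is finitely generated, hence so is $\mathscr{E}(\mathscr{R}(J)/\mathfrak{p})\big/ y\cdot\mathscr{E}(\mathscr{R}(J)/\mathfrak{p})$ (it is a sub\-quotient over a Noetherian ring). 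A graded Nakayama argument --- using that $y$ has strictly positive Rees degree and that $\mathscr{E}(\cdot)$ vanishes in negative Rees degrees --- then lifts finite generation from $E/yE$ to $E$. This last step is the missing idea in your sketch; without it, the induction never closes.
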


 \begin{para}[Bigraded modules over standard bigraded ring]\label{para:bigrad-ring}
 	With Hypothesis~\ref{setup1}, we set $ W = \bigoplus_{(i,n) \in \bN^2} W_{(i,n)} := \mathscr{E}(\mathcal{N}) $, i.e., $ W_{(i,n)} := \Ext_A^i(M,N_n) $ for every $ (i,n) \in \bN^2 $. Note that $ \sR(J) $ is a standard $ \bN $-graded $ A $-algebra. Since $ J $ is generated by $ y_1,\ldots,y_b $, we may write $ \sR(J) = A[y_1 X,\ldots,y_bX] $, where $ \deg(A) = 0 $ and $ \deg(y_i X) = 1 $ for $ 1 \le i \le b $. Hence
 	\begin{equation}\label{3.8.1}
 		\sS_J  = \sR(J)[t_1,\ldots,t_c] = A[y_1 X,\ldots,y_bX,t_1,\ldots,t_c]
 	\end{equation}
 	is an $ \bN^2 $-graded ring, where we set $ \deg(A) = (0,0) $, $ \deg(y_i X) = (0,1) $ for $ 1 \le i \le b $, and $ \deg(t_j) = (2,0) $ for $1 \le j \le c$. Note that $ W $ is a finitely generated $ \bN^2 $-graded $ \mathscr{S}_J $-module (by Theorem~\ref{thm:fin-gen}), where each $ W_{(i,n)} $ is a (finitely generated) $ \bZ $-graded $ A $-module. In view of \eqref{3.8.1}, we construct a new standard $ \bN^2 $-graded ring
 	\begin{equation}\label{3.8.2}
 		\sS'_J := \sR(J)[Z_1,\ldots,Z_c] = A[y_1 X,\ldots,y_bX,Z_1,\ldots,Z_c],
 	\end{equation}
 	where $ \deg(A) = (0,0) $, $ \deg(y_i X) = (0,1) $ for $ 1 \le i \le b $, and $ \deg(Z_j) = (1,0) $ for $1 \le j \le c$. Moreover, we set
 	\begin{equation}\label{3.8.3}
 		W^{\rm even} := \bigoplus_{(i,n) \in \bN^2} W_{(2i,n)} \quad \mbox{and} \quad W^{\rm odd} := \bigoplus_{(i,n) \in \bN^2} W_{(2i+1,n)}.
 	\end{equation}
 	Note that $ W_{(2i,n)} $ and $ W_{(2i+1,n)} $ are the $ (i,n) $th graded components of $ W^{\rm even} $ and $ W^{\rm odd} $ respectively. We define the action of $ \sS'_J $ on $ W^{\rm even} $ and $ W^{\rm odd} $ as follows: Elements of $ A[y_1 X,\ldots,y_bX] $ act on $ W^{\rm even} $ and $ W^{\rm odd} $ as before; while the action of $ Z_j $ $ (1 \le j \le c) $ is defined by $	Z_j \cdot m := t_j \cdot m $ for all $ m \in W^{\rm even} $ (resp. $ W^{\rm odd} $). Then, for every $ 1 \le j \le c $, we have
 	\begin{align*}
 	Z_j \cdot W_{(2i,n)} \subseteq W_{(2i+2,n)}, \quad & \mbox{i.e., } Z_j \cdot W^{\rm even}_{(i,n)} \subseteq W^{\rm even}_{(i+1,n)} \mbox{ for all } (i,n) \in \bN^2;\\
 	Z_j \cdot W_{(2i+1,n)} \subseteq W_{(2(i+1)+1,n)}, \quad & \mbox{i.e., } Z_j \cdot W^{\rm odd}_{(i,n)} \subseteq W^{\rm odd}_{(i+1,n)} \mbox{ for all } (i,n) \in \bN^2.
 	\end{align*}
 	Thus $ W^{\rm even} $ and $ W^{\rm odd} $ are $ \bN^2 $-graded $ \sS'_J $-modules. Moreover, $ W^{\rm even} $ and $ W^{\rm odd} $ are finitely generated, which is contained in the proof of \cite[Thm.~3.3.9]{Gho16b}.
 \end{para}

 \begin{para}[Trigraded setup]\label{para:Z3-grad-ring}
 	We make $ \sS'_J $ a $ \bZ^3 $-graded ring as follows. Write
 	\begin{equation}\label{3.9.1}
 		\sS'_J := \sR(J)[Z_1,\ldots,Z_c] = K[x_1,\ldots,x_d, y_1 X,\ldots, y_bX,Z_1, \ldots, Z_c],
 	\end{equation}
 	and set $ \deg(x_i) = (0,0,1) $ for $ 1 \le i \le d $, $ \deg(y_k X) = (0,1,d_k) $ for $ 1 \le k \le b $, and $ \deg(Z_j) = (1,0,-f_j) $ for $ 1 \le j \le c $. Suppose $ V = W^{\rm even} $. We give $ \bZ^3 $-grading structure on $ V $ by setting $ (i,n,a) $th graded component of $ V $ as the $ a $th graded component of the $ \bZ $-graded $ A $-module $ V_{(i,n)} $ for every $ (i,n,a) \in \bZ^3 $. (Note that $ V_{(i,n,a)} = 0 $ if $ i < 0 $ or $ n < 0 $). Thus, in view of \eqref{action} and \ref{para:bigrad-ring}, $ V = W^{\rm even} $ is a $ \bZ^3 $-graded $ \sS'_J $-module. Moreover, $ W^{\rm even} $ is \fg (since we are changing only the grading). In a similar way, one obtains that $ W^{\rm odd} $ is a \fg $ \bZ^3 $-graded $ \sS'_J $-module. In view of \eqref{3.9.1}, we now set a polynomial ring
 	\begin{equation}\label{poly-Z3-grad}
 		S_J := K[X_1,X_2,\ldots,X_d,Y_1,Y_2,\ldots,Y_b,Z_1,Z_2,\ldots,Z_c]
 	\end{equation}
 	over the field $ K $, where $ \deg(X_i) = (0,0,1) $ for $ 1 \le i \le d $, $ \deg(Y_k) = (0,1,d_k) $ for $ 1 \le k \le b $, and $ \deg(Z_j) = (1,0,-f_j) $ for $1 \le j \le c$. We define the action of $ S_J $ on $ W^{\rm even} $ and $ W^{\rm odd} $ as follows: Elements of $ K[Z_1,\ldots,Z_c] $ act on $ W^{\rm even} $ and $ W^{\rm odd} $ as before; while the actions of $ X_i $ and $ Y_j $ (for $ 1 \le i \le d $ and $ 1 \le j \le b $) are defined by $ X_i \cdot m := x_i \cdot m $ and $ Y_j \cdot m := (y_j X) \cdot m $ (respectively) for all $ m \in W^{\rm even} $ (resp. $ W^{\rm odd} $). Thus $ W^{\rm even} $ and $ W^{\rm odd} $ are \fg $ \bZ^3 $-graded $ S_J $-modules.
 \end{para}

 We conclude this section by obtaining the following:
 
 \begin{para}\label{Z3-grad-struc-on-Ext}
 	With Hypothesis~\ref{setup1}, in view of \ref{para:bigrad-ring} and \ref{para:Z3-grad-ring}, we have that
 	\[
 		\mathscr{E}(\mathcal{N})^{\rm even} = \bigoplus_{(i,n) \in \bZ^2} \Ext_A^{2i}(M,N_n) \quad \mbox{and} \quad \mathscr{E}(\mathcal{N})^{\rm odd} = \bigoplus_{(i,n) \in \bZ^2} \Ext_A^{2i+1}(M,N_n)
 	\]
 	are \fg $ \bZ^3 $-graded $ S_J $-modules, where
 	\[
 		\mathscr{E}(\mathcal{N})^{\rm even}_{(i,n,a)} = \Ext_A^{2i}(M,N_n)_a \quad \mbox{and} \quad \mathscr{E}(\mathcal{N})^{\rm odd}_{(i,n,a)} = \Ext_A^{2i+1}(M,N_n)_a
 	\]
 	for every $ (i,n,a) \in \bZ^3 $, and $ S_J $ is a $ \bZ^3 $-graded polynomial ring as in \eqref{poly-Z3-grad}. In view of \ref{reg-over-Q-or-A}, we should note that
 	\[
 		\reg_A\left( \Ext_A^i(M,N_n) \right) = \reg_Q\left( \Ext_A^i(M,N_n) \right) \quad \mbox{for all } i, n \in \bN.
 	\]
 \end{para}
 
 We obtain our main results by proving a theorem in arbitrary trigraded setup; see Theorem~\ref{thm:lin-bdd-multigrad} in the next section.
 
\section{Linear bounds of regularity in multigraded modules}\label{sec: linear bounds in multigraded modules}

 \begin{para}\label{setup1-dipu}
 	We use notations from \ref{notations}. Let
 	\begin{equation}\label{poly-Zt-grad}
 		S := K[X_1,\ldots,X_d,Y_1,\ldots,Y_b,Z_1,\ldots,Z_c]
 	\end{equation}
 	be a polynomial ring over a field $ K $, where we set $ \deg(K) = (0,0,0) $,
 	\begin{align}\label{degrees}
 		& \deg(X_i) := (0,0,1) \; \mbox{ for } 1 \le i \le d; \\
 		& \deg(Y_j) := (0,1,h_j) \; \mbox{ for } 1 \le j \le b \quad \mbox{and} \nonumber \\
 		& \deg(Z_k) := (1,0,g_k) \; \mbox{ for } 1 \le k \le c \nonumber 
 	\end{align}
 	for some $ h_j, g_k \in \bZ $. Note that $ h_j $ and $ g_k  $ are allowed to be negative integers. Without loss of generality, we may assume that
 	\[
 		h_1 \ge h_2 \ge \cdots \ge h_b \quad \mbox{and} \quad g_1 \ge g_2 \ge \cdots \ge g_c.
 	\]
 	Clearly, $ S $ is a Noetherian $ \bZ^3 $-graded ring of dimension $ d' := d + b + c $. Let
 	\[
 		L = \bigoplus_{(i,n,a) \in \bZ^3} L_{(i,n,a)}
 	\]
 	be a finitely generated $ \bZ^3 $-graded $ S $-module. For every $ (i,n) \in \bZ^2 $, we set
 	\begin{equation}\label{S-L-star}
 		S_{(i, n, \star)} := \bigoplus_{a \in \bZ} S_{(i, n, a)} \quad \mbox{and} \quad L_{(i, n, \star)} := \bigoplus_{a \in \bZ} L_{(i, n, a)}.
 	\end{equation}
 	It can be observed that $ S_ {(0,0, \star)} = K[X_1,\ldots,X_d] $ which is a standard $ \bN $-graded polynomial ring  over $ K $, where $ \deg(X_i) = 1 $ for $ 1 \le i \le d $. Set $ Q := S_ {(0,0, \star)} $. Note that $ S = \bigoplus_{(i,n) \in \mathbb{N}^2} S_{(i,n,\star)} $, and
 	\begin{equation}\label{L-over-S}
	 	L = \bigoplus_{(i,n) \in \mathbb{Z}^2} L_{(i,n,\star)} \mbox{ is a $ \bZ^2 $-graded module over } S = \bigoplus_{(i,n) \in \mathbb{N}^2} S_{(i,n,\star)},
 	\end{equation}
 	where $ L_{(i,n,\star)} $ is the $ (i,n) $th graded component of $ L $. Since we are changing only the grading, $ S = \bigoplus_{(i,n) \in \mathbb{N}^2} S_{(i,n,\star)} $ is Noetherian and $ L $ is \fg as a $ \bZ^2 $-graded $ S $-module. Note that $ S $ is standard as an $ \bN^2 $-graded ring, i.e., $ S = S_ {(0,0, \star)}[S_{(1,0,\star)},S_{(0,1,\star)}] $. Thus, for every $ (i,n) \in \bZ^2 $, $ L_{(i,n,\star)} $ is a \fg $ \bZ $-graded module over $ Q = S_ {(0, 0, \star)} $. It can be observed that (for every $ (i,n) \in \bZ^2 $) the assignment $ L \mapsto L_{(i, n, \star)} $ is an exact functor from the category of $ \bZ^3 $-graded $ S $-modules to the category of $ \bZ $-graded $ Q $-modules.
 \end{para}

 \begin{para}\label{setup2-dipu}
 	With the hypotheses as in \ref{setup1-dipu}, by virtue of a multigraded version of Hilbert's Syzygy Theorem, there is a $ \bZ^3 $-graded free resolution
 	\begin{align}
 		& 0 \lra F_{d'} \stackrel{\varphi_{d'}}{\lra} F_{d'-1} \lra \cdots \lra F_1 \stackrel{\varphi_1}{\lra} F_0 \lra 0 \quad \mbox{of $ L $ over $ S $, where} \label{4.2.1} \\
 		& F_l = \bigoplus_p S(-b_{pl1}, -b_{pl2}, -a_{pl}) \quad \mbox{for every } 0 \le l \le d'. \label{4.2.2}
 	\end{align}
 	Note that $ p $ varies over a finite direct sum in $ F_l $. It should be noted that there is no notion of minimal free resolution over arbitrary $ \bZ^3 $-graded ring. We set
 	\begin{align}
 		c_l & := \max_p \left\{ a_{pl} - g_1 b_{pl1} - h_1 b_{pl2} \right\} \mbox{ for } 0 \le l \le d', \mbox{ and} \label{4.2.3} \\
 		e & := \max\{ c_l - l : 0 \le l \le d' \}. \label{4.2.4}
 	\end{align}
 \end{para}
 
 The following result is a multigraded version of \cite[Thm.~1]{Kod00}.
 
 \begin{theorem}\label{thm:lin-bdd-multigrad}
 	With the hypotheses as in {\rm \ref{setup1-dipu} and  \ref{setup2-dipu}}, we have
 	\[
 		\reg_Q\left( L_{(i, n,\star)} \right) \le g_1 \cdot i + h_1 \cdot n + e \quad \mbox{for every } (i, n) \in \bZ^2.
 	\]
 \end{theorem}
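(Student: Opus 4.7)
The plan is to adapt Kodiyalam's proof \cite[Thm.~1]{Kod00} to the $\bZ^3$-graded setting, exploiting the fact, recorded at the end of \ref{setup1-dipu}, that $L \mapsto L_{(i,n,\star)}$ is an exact functor from $\bZ^3$-graded $S$-modules to $\bZ$-graded $Q$-modules. Applying this functor to the finite $\bZ^3$-graded free resolution \eqref{4.2.1} of $L$ produces an exact sequence
\begin{equation*}
0 \lra (F_{d'})_{(i,n,\star)} \lra \cdots \lra (F_1)_{(i,n,\star)} \lra (F_0)_{(i,n,\star)} \lra L_{(i,n,\star)} \lra 0
\end{equation*}
of $\bZ$-graded $Q$-modules. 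Once each $(F_l)_{(i,n,\star)}$ is identified as a graded free $Q$-module, this becomes a (not necessarily minimal) graded free $Q$-resolution of $L_{(i,n,\star)}$, and the bound \eqref{reg-via-syz} recalled in \ref{reg-syz} will control $\reg_Q(L_{(i,n,\star)})$ purely in terms of the shifts that appear.

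The identification reduces to a single summand. Writing $S = Q[Y_1,\ldots,Y_b,Z_1,\ldots,Z_c]$ with $Q = K[X_1,\ldots,X_d]$, the monomial basis of $S$ gives $S_{(p,q,\star)} = \bigoplus_{|\beta|=q,\,|\gamma|=p} Q \cdot Y^\beta Z^\gamma$, where each $Y^\beta Z^\gamma$ lies in $Q$-degree $\sum_j h_j \beta_j + \sum_k g_k \gamma_k$. Incorporating the shift by $-a$ then yields
\begin{equation*}
S(-b_1,-b_2,-a)_{(i,n,\star)} \;\cong\; \bigoplus_{|\beta|=n-b_2,\,|\gamma|=i-b_1} Q\bigl(-a - \textstyle\sum_j h_j\beta_j - \sum_k g_k\gamma_k\bigr),
\end{equation*}
which is a free $Q$-module (and zero when $n<b_2$ or $i<b_1$). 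Summing over the summands of $F_l$ exhibits $(F_l)_{(i,n,\star)}$ as a free $Q$-module.

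Finally, using the ordering hypotheses $h_1 \ge h_j$ and $g_1 \ge g_k$, the maximum of $\sum_j h_j\beta_j + \sum_k g_k\gamma_k$ over compositions with $|\beta| = n-b_{pl2} \ge 0$ and $|\gamma| = i-b_{pl1} \ge 0$ equals $h_1(n-b_{pl2}) + g_1(i-b_{pl1})$. Maximising over $p$ and invoking \eqref{4.2.3} then bounds the largest $Q$-shift appearing in $(F_l)_{(i,n,\star)}$ by $g_1 i + h_1 n + c_l$. Applying \eqref{reg-via-syz} to the $Q$-free resolution above, then maximising over $l$ and using \eqref{4.2.4}, gives
\begin{equation*}
\reg_Q(L_{(i,n,\star)}) \;\le\; \max_{0 \le l \le d'} \{ g_1 i + h_1 n + c_l - l \} \;=\; g_1 i + h_1 n + e.
\end{equation*}
The only delicate point I foresee is the bookkeeping in the max-shift computation: I must confirm that the ordering $h_1 \ge \cdots \ge h_b$ and $g_1 \ge \cdots \ge g_c$ still yields the correct extremum when some $h_j$ or $g_k$ are negative, and that summands of $F_l$ with $n<b_{pl2}$ or $i<b_{pl1}$ simply drop out (and hence impose no lower bound on the shifts). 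Neither issue looks serious.
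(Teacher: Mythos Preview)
Your proposal is correct and follows essentially the same approach as the paper's own proof: apply the exact functor $L \mapsto L_{(i,n,\star)}$ to the finite $\bZ^3$-graded free resolution, identify each $(F_l)_{(i,n,\star)}$ as a direct sum of shifted copies of $Q$ via the monomial decomposition of $S(-b_{pl1},-b_{pl2},-a_{pl})_{(i,n,\star)}$, bound the maximal shift using the orderings $h_1 \ge h_j$ and $g_1 \ge g_k$, and then invoke \eqref{reg-via-syz}. Your residual worries are harmless: the extremum $\sum h_j\beta_j \le h_1|\beta|$ holds for nonnegative $\beta$ regardless of the signs of the $h_j$, and summands with $n<b_{pl2}$ or $i<b_{pl1}$ are zero and contribute nothing.
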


\begin{proof}
	Fix an arbitrary $ (i, n) \in \bZ^2 $. In view of \eqref{4.2.1}, the $ (i,n,\star) $th components yield an exact sequence of $ \bZ $-graded $ Q $-modules:
	\begin{equation}\label{4.3.1}
		0 \lra (F_{d'})_{(i,n,\star)} \lra \cdots \lra (F_1)_{(i,n,\star)} \lra (F_0)_{(i,n,\star)} \lra L_{(i,n,\star)} \lra 0.
	\end{equation}
	Note that if $ L_{(i,n,\star)} = 0 $, then there is nothing to prove. So we assume that $ L_{(i,n,\star)} \neq 0 $. Hence $ (F_0)_{(i,n,\star)} \neq 0 $. For every $ 0 \le l \le d' $, we claim that $ (F_l)_{(i,n,\star)} $ is a (finitely generated) $ \bZ $-graded free module over $ Q = K[X_1,\ldots,X_d] $. (Possibly, $ (F_l)_{(i,n,\star)} = 0 $). For every $ a \in \bZ $, it can be observed that
	\begin{align}
		S(-b_{pl1}, -b_{pl2}, -a_{pl})_{(i,n,a)} & = S_{(i - b_{pl1}, n - b_{pl2}, a - a_{pl})} \label{4.3.2}\\
		\cong  \sum_{ \begin{array}{l}
			u_j, v_k \in \bN \;(1 \le j \le b, 1 \le k \le c);\\
			u_1 + u_2 + \cdots + u_b = n - b_{pl2};\\	
			v_1 + v_2 + \cdots + v_c = i - b_{pl1}
			\end{array} } &\left(  \prod_{j = 1}^{b} Y_j^{u_j} \prod_{k = 1}^c Z_k^{v_k} \right) Q_{(a - a_{pl} - \sum u_j h_j - \sum v_k g_k)} \nonumber \\
		\cong \bigoplus_{ \begin{array}{l}
			u_j, v_k \in \bN \;(1 \le j \le b, 1 \le k \le c);\\
			u_1 + u_2 + \cdots + u_b = n - b_{pl2};\\	
			v_1 + v_2 + \cdots + v_c = i - b_{pl1}
			\end{array} } &Q\left(- a_{pl} - \sum_{j=1}^b u_j h_j - \sum_{k = 1}^c v_k g_k \right)_a.\nonumber
	\end{align}
	Note that $ S(-b_{pl1}, -b_{pl2}, -a_{pl})_{(i,n,\star)} = 0 $ if $ i < b_{pl1} $ or $ n < b_{pl2} $. In view of \eqref{4.2.2} and \eqref{4.3.2}, for every $ 0 \le l \le d' $, we have
	\begin{align}
		& (F_l)_{(i,n,\star)} = \bigoplus_p S(-b_{pl1}, -b_{pl2}, -a_{pl})_{(i,n,\star)} \label{4.3.3}\\
		\cong \; \bigoplus_p & \bigoplus_{ \begin{array}{l}
			u_j, v_k \in \bN \;(1 \le j \le b, 1 \le k \le c);\\
			u_1 + u_2 + \cdots + u_b = n - b_{pl2};\\	
			v_1 + v_2 + \cdots + v_c = i - b_{pl1}
			\end{array} } Q\left(- a_{pl} - \sum_{j=1}^b u_j h_j - \sum_{k = 1}^c v_k g_k \right). \nonumber
	\end{align}
	Thus \eqref{4.3.1} provides us a $ \bZ $-graded $ Q $-free resolution of $ L_{(i,n,\star)} $. This resolution is not necessarily minimal. Since $ h_1 \ge \cdots \ge h_b$ and $ g_1 \ge \cdots \ge g_c $, it can be observed from \eqref{4.3.3} that if $ (F_l)_{(i,n,\star)} \neq 0 $, then the maximal twist in $ (F_l)_{(i,n,\star)} $ (as a $ \bZ $-graded $ Q $-free module) is
	\begin{align*}
		&\max \left\{ a_{pl} + (n - b_{pl2}) h_1 + (i - b_{pl1}) g_1 \; \middle| \; \mbox{for all $ p $ such that $ n \ge b_{pl2} $ and $ i \ge b_{pl1} $} \right\} \\
		& \le i g_1 + n h_1 + c_l \quad \mbox{for } 0 \le l \le d' \quad \mbox{[by \eqref{4.2.3}]}.
	\end{align*}
	Hence, in view of \eqref{reg-via-syz} and \eqref{4.2.4}, we obtain that
	\[
		\reg_Q\left( L_{(i, n,\star)} \right) \le g_1 \cdot i + h_1 \cdot n + e \quad \mbox{for every } (i, n) \in \bZ^2.
	\]
	This completes the proof.
\end{proof}

 \begin{remark}
 	In a more general setup, in \cite[Thm.~4.3]{Gho16a}, it is shown that
 	\[
 		\reg \left( L_{(\underline{n},\star)} \right) \le (n_1 + \cdots + n_t) a + b \mbox{ for every } \underline{n} \in \bN^t,
 	\]
 	where $ a $ and $ b $ are some constants. But the method of proof makes it difficult to identify such constants $ a $ and $ b $ explicitly.
 \end{remark}
 
 \begin{remark}
 	Similar proof as in Theorem~\ref{thm:lin-bdd-multigrad} works for arbitrary multigraded setup. In analogous multigraded setup, we obtain that
 	\[
 		\reg_Q\left( L_{(\underline{n},\star)} \right) \le n_1 d_1 + \cdots + n_t d_t + e \quad \mbox{for every } \underline{n} \in \bZ^t,
 	\]
 	where the coefficients $ d_1,\ldots,d_t $ can be determined explicitly in terms of degrees of the indeterminates of $ S $, and the constant term $ e $ can be obtained from a multigraded free resolution of $ L $ over $ S $ (as in \ref{setup2-dipu}).
 \end{remark}
%

 As a consequence of Theorem~\ref{thm:lin-bdd-multigrad}, we obtain the following:
 
 \begin{corollary}\label{cor:lin-bdd-ext}
 	With {\rm Hypothesis~\ref{setup1}}, there are constants $ e_0 $ and $ e_1 $ such that
 	\begin{enumerate}[{\rm (i)}]
 		\item $ \reg\left( \Ext_A^{2i}(M, N_n) \right) \le d_1 \cdot n - f_1 \cdot i + e_0 $ for all $ i, n \ge 0 $.
 		\item $ \reg\left( \Ext_A^{2i+1}(M, N_n) \right) \le d_1 \cdot n - f_1 \cdot i + e_1 $ for all $ i, n \ge 0 $.
 	\end{enumerate}
 \end{corollary}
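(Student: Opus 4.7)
The plan is to apply Theorem~\ref{thm:lin-bdd-multigrad} directly to the two $\bZ^3$-graded modules $\mathscr{E}(\mathcal{N})^{\mathrm{even}}$ and $\mathscr{E}(\mathcal{N})^{\mathrm{odd}}$ constructed in paragraphs~\ref{para:bigrad-ring}--\ref{Z3-grad-struc-on-Ext}. All the substantive work has already been carried out in Section~\ref{sec: module structure on Ext}: by \ref{Z3-grad-struc-on-Ext}, both $\mathscr{E}(\mathcal{N})^{\mathrm{even}}$ and $\mathscr{E}(\mathcal{N})^{\mathrm{odd}}$ are finitely generated $\bZ^3$-graded modules over the polynomial ring $S_J$ of \eqref{poly-Z3-grad}, whose generators are assigned the degrees $\deg(X_i) = (0,0,1)$, $\deg(Y_k) = (0,1,d_k)$, and $\deg(Z_j) = (1,0,-f_j)$.

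First, I would match notations with \ref{setup1-dipu}: set $h_k := d_k$ for $1 \le k \le b$ and $g_j := -f_j$ for $1 \le j \le c$. The ordering assumptions $h_1 \ge \cdots \ge h_b$ and $g_1 \ge \cdots \ge g_c$ required by \ref{setup1-dipu} are then immediate from the orderings $d_1 \ge \cdots \ge d_b$ and $f_1 \le \cdots \le f_c$ imposed in Hypothesis~\ref{setup0}. Note in particular $h_1 = d_1$ and $g_1 = -f_1$, which are precisely the coefficients that will appear in the final bound.

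Next, I would apply Theorem~\ref{thm:lin-bdd-multigrad} to $L = \mathscr{E}(\mathcal{N})^{\mathrm{even}}$. The theorem produces a constant $e_0$ (determined by a $\bZ^3$-graded free resolution of $L$ over $S_J$ as in \ref{setup2-dipu}) such that
\[
\reg_Q\!\left( L_{(i,n,\star)} \right) \;\le\; g_1\cdot i + h_1\cdot n + e_0 \;=\; -f_1\cdot i + d_1\cdot n + e_0
\]
for every $(i,n) \in \bZ^2$. By the identification in \ref{Z3-grad-struc-on-Ext}, the $\bZ$-graded $Q$-module $L_{(i,n,\star)}$ equals $\Ext_A^{2i}(M,N_n)$, and by \ref{reg-over-Q-or-A} we have $\reg_Q = \reg_A =: \reg$ on such modules. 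Restricting to $i,n \ge 0$ yields part~(i). Repeating the argument verbatim with $L = \mathscr{E}(\mathcal{N})^{\mathrm{odd}}$ produces a constant $e_1$ and gives part~(ii).

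There is essentially no obstacle, since all the heavy lifting, namely constructing the $\bZ^3$-graded $S_J$-module structure on $\Ext$ and proving the underlying linear bound, has already been done. The only thing to double-check is the bookkeeping of the signs and degree conventions when passing from $\deg(Z_j) = (1,0,-f_j)$ in \eqref{poly-Z3-grad} to the generic $\deg(Z_k) = (1,0,g_k)$ in \eqref{degrees}, which is what produces the $-f_1\cdot i$ term (rather than $+f_1 \cdot i$) in the conclusion.
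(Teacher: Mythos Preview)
Your proposal is correct and follows essentially the same approach as the paper's own proof: both apply Theorem~\ref{thm:lin-bdd-multigrad} directly to the finitely generated $\bZ^3$-graded $S_J$-modules $\mathscr{E}(\mathcal{N})^{\rm even}$ and $\mathscr{E}(\mathcal{N})^{\rm odd}$ from \ref{Z3-grad-struc-on-Ext}, after matching the degree conventions $h_k = d_k$, $g_j = -f_j$ and verifying the required orderings. Your write-up is slightly more explicit about the sign bookkeeping and the identification $\reg_Q = \reg_A$, but the argument is the same.
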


\begin{proof}
	In view of \ref{Z3-grad-struc-on-Ext}, $ \mathscr{E}(\mathcal{N})^{\rm even} $ and $ \mathscr{E}(\mathcal{N})^{\rm odd} $ are \fg $ \bZ^3 $-graded $ S_J $-modules, where
	\[
		S_J := K[X_1,X_2,\ldots,X_d,Y_1,Y_2,\ldots,Y_b,Z_1,Z_2,\ldots,Z_c]
	\]
	with $ \deg(X_i) = (0,0,1) $ for $ 1 \le i \le d $, $ \deg(Y_k) = (0,1,d_k) $ for $ 1 \le k \le b $, and $ \deg(Z_j) = (1,0,-f_j) $ for $1 \le j \le c$. Note that $ d_1 \ge \cdots \ge d_b $ and $ - f_1 \ge \cdots \ge - f_c $. Moreover, for every $ (i,n) \in \bN^2 $, we have
	\[
		\mathscr{E}(\mathcal{N})^{\rm even}_{(i,n,\star)} = \Ext_A^{2i}(M, N_n) \quad \mbox{and} \quad \mathscr{E}(\mathcal{N})^{\rm odd}_{(i,n,\star)} = \Ext_A^{2i+1}(M, N_n).
	\]
	Therefore the result follows by applying Theorem~\ref{thm:lin-bdd-multigrad} to $ \mathscr{E}(\mathcal{N})^{\rm even} $ and $ \mathscr{E}(\mathcal{N})^{\rm odd} $.
\end{proof}

\section{Linear bounds of regularity for certain Exts}\label{sec: regularity of Exts}
 
 We are now in a position to obtain our main results. We prove these results with the following setup.
 
 \begin{hypothesis}\label{main-hyp}
 	Let $ Q = K[X_1,\ldots,X_d] $ be a polynomial ring over a field $ K $, where $ \deg(X_i) = 1 $ for $ 1 \le i \le d $. Set $ A := Q/({\bf z}) $, where $ {\bf z}  = z_1,\ldots,z_c $ is a homogeneous $ Q $-regular sequence. Let $ I $ be a homogeneous ideal of $ A $. Let $ M $ and $ N $ be \fg $ \bZ $-graded $ A $-modules. Set $ f := \min\{ \deg(z_j) : 1 \le j \le c \} $.
 \end{hypothesis}

\begin{para}\label{para:rho_N(I)}
	Recall that an ideal $ J $ of $ A $ is said to be an $ N $-{\it reduction} of $ I $ if $ J \subseteq I $ and $ I^{n+1}N = J I^n N $ for some $ n \ge 0 $. The invariant $ \rho_N(I) $ is defined by
	\begin{equation}\label{rho_N(I)}
		\rho_N(I) := \min\{ d(J) : J \mbox{ is a homogeneous $ N $-reduction ideal of $ I $} \},
	\end{equation}
	where $ d(J) $ denotes the minimum number $ n $ such that $ J $ is generated by homogeneous elements of degree $ \le n $. 
\end{para}

\begin{para}\label{para:filtration}
	A sequence of ideals $ \{ I_n \}_{n \ge 0} $ is called an $ I $-{\it filtration} if $ I I_n \subseteq I_{n+1} $ for all $ n \ge 0 $. In addition, if $ I I_n = I_{n+1} $ for all sufficiently large $ n $, then $ \{ I_n \}_{n \ge 0} $ is said to be a {\it stable $ I $-filtration}.
\end{para}

Here are our main results.

\begin{theorem}\label{thm:main}
	Along with {\rm Hypothesis~\ref{main-hyp}}, further assume that $ \{I_n\}_{n \ge 0} $ is a stable $ I $-filtration. Then, for every $ l \in \{0,1\} $, there exist constants $ e_{l1} $ and $ e_{l2} $ such that the following inequalities hold true.
	\begin{enumerate}[{\rm (i)}]
		\item $ \reg\left( \Ext_A^{2i+l}(M, I_n N) \right) \le \rho_N(I) \cdot n - f \cdot i + e_{l1} $ for all $ i, n \ge 0 $.
		\item $ \reg\left( \Ext_A^{2i+l}(M,N/I_n N) \right) \le \rho_N(I) \cdot n - f \cdot i + e_{l2} $ for all $ i, n \ge 0 $.
	\end{enumerate}
	The particular case when $ I_n = I^n $ proves the statements given in the introduction.
\end{theorem}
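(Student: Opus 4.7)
The plan is to deduce part~(i) directly from Corollary~\ref{cor:lin-bdd-ext} applied to the Rees-type module $\bigoplus_{n \ge 0} I_n N$, and to bootstrap from part~(i) to part~(ii) using the long exact sequence of $\Ext$ attached to $0 \to I_n N \to N \to N/I_n N \to 0$ together with the regularity rules in Lemma~\ref{lemma: properties of regularity}. Throughout, I would fix a homogeneous $N$-reduction $J = (y_1,\ldots,y_b)$ of $I$ realising $d(J) = \rho_N(I)$, arranged as in Hypothesis~\ref{setup0} so that $d_1 = \rho_N(I)$ and $f_1 = f$.

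For part~(i), the key point is that $\mathcal{N} := \bigoplus_{n \ge 0} I_n N$ is a finitely generated graded $\mathscr{R}(J)$-module. Stability of the filtration gives $I_{n+1} = I \cdot I_n$ for all $n \ge n_0$, whence $I_{n+1} N = I^{\,n+1-n_0} I_{n_0} N$ for such $n$. Since $J$ is an $N$-reduction of $I$, the identity $I^{k+1}(I_{n_0}N) = I_{n_0}\cdot I^{k+1}N = I_{n_0}\cdot J\,I^k N = J\cdot I^{k}(I_{n_0}N)$ holds for all $k$ exceeding some reduction number $r$. Combining these gives $I_{n+1} N = J \cdot I_n N$ for every $n \ge n_0 + r$, so $\mathcal{N}$ is generated over $\mathscr{R}(J)$ by the finitely many $A$-modules $I_0 N,\, I_1 N,\ldots, I_{n_0+r} N$. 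Corollary~\ref{cor:lin-bdd-ext} applied to $\mathcal{N}$ then yields~(i).

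For part~(ii), I would first record the auxiliary bound $\reg(\Ext_A^{2i+l}(M, N)) \le -f \cdot i + c_l$ obtained as the special case of~(i) for the trivial filtration $\{I^n\}$ evaluated at $n = 0$ (noting $I^0 N = N$). Then, for each fixed $n$, the long exact sequence attached to $0 \to I_n N \to N \to N/I_n N \to 0$ splits off kernels and images to give the short exact sequence
\[
0 \to \Image(\beta_{2i+l}^n) \to \Ext_A^{2i+l}(M, N/I_n N) \to \Image(\delta_{2i+l}^n) \to 0,
\]
with $\Image(\beta_{2i+l}^n)$ a quotient of $\Ext_A^{2i+l}(M, N)$ and $\Image(\delta_{2i+l}^n)$ a submodule of $\Ext_A^{2i+l+1}(M, I_n N)$. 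Chained application of Lemma~\ref{lemma: properties of regularity}(i)--(iii) to the intermediate short exact sequences $0 \to \Ker(\alpha_k^n) \to \Ext_A^{k}(M, I_n N) \to \Image(\alpha_k^n) \to 0$ and $0 \to \Image(\alpha_k^n) \to \Ext_A^{k}(M, N) \to \Image(\beta_k^n) \to 0$, fed by the bounds from~(i) on $\reg(\Ext_A^{\bullet}(M, I_n N))$ and by the auxiliary inequality on $\reg(\Ext_A^{\bullet}(M, N))$, should give $\reg(\Ext_A^{2i+l}(M, N/I_n N)) \le \rho_N(I)\cdot n - f \cdot i + e_{l2}$.

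The main obstacle is that the regularity rules of Lemma~\ref{lemma: properties of regularity}, when chained through the long exact sequence, exhibit an inherent circular dependence: each image term is simultaneously a quotient of one $\Ext$ and a submodule of the next, and naive alternation of the rules yields only tautological inequalities. I would resolve this by induction on the cohomological index $2i + l$, handled in parallel for the two parities $l \in \{0,1\}$; the base case $2i + l = 0$ is immediate because $\Ker(\alpha_0^n) = 0$ forces $\Image(\alpha_0^n) \cong \Ext_A^{0}(M, I_n N)$, giving the required bound on $\Image(\beta_0^n)$ via Lemma~\ref{lemma: properties of regularity}(iii). Carefully tracking the $\pm 1$ offsets accumulated at each step then produces explicit constants $e_{l2}$ in terms of $e_{l1}$ and $c_l$, and the case $I_n = I^n$ recovers the statements given in the introduction.
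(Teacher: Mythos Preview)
Your argument for part~(i) is correct and coincides with the paper's proof: choose a homogeneous $N$-reduction $J$ realising $d(J)=\rho_N(I)$, verify that $\mathcal{N}=\bigoplus_{n\ge 0} I_nN$ is a finitely generated graded $\mathscr{R}(J)$-module, and invoke Corollary~\ref{cor:lin-bdd-ext}.

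Part~(ii), however, has a genuine gap, and the induction you propose does not close it. The difficulty is in bounding $\reg(\Image(\delta_{2i+l}^{\,n}))$. As a submodule of $\Ext_A^{2i+l+1}(M,I_nN)$, its regularity can only be controlled via Lemma~\ref{lemma: properties of regularity}(i), which costs a $+1$ and feeds into the quotient $\Image(\alpha_{2i+l+1}^{\,n})$; that in turn (again as a submodule of $\Ext_A^{2i+l+1}(M,N)$) costs another $+1$ and feeds into $\Image(\beta_{2i+l+1}^{\,n})$; and a third $+1$ brings you either to $\Ext_A^{2i+l+1}(M,N/I_nN)$, which is exactly what you are trying to bound, or to $\Image(\delta_{2i+l+1}^{\,n})$. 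Unrolling therefore produces a recursion of the shape
\[
\reg\bigl(\Image(\delta_k^{\,n})\bigr)\ \le\ \max\bigl\{\,\reg(\Ext_A^{k+1}(M,I_nN))+1,\ \reg(\Ext_A^{k+1}(M,N))+2,\ \reg\bigl(\Image(\delta_{k+1}^{\,n})\bigr)+3\,\bigr\},
\]
so that after $m$ steps the offset on the $\Image(\delta_{k+m}^{\,n})$ term is $3m$, while the known bounds from part~(i) decrease only like $-f\cdot m/2$. Unless $f>6$ this chain never terminates, and the induction on the cohomological index yields only the tautology you already flagged. Working componentwise with Lemma~\ref{lemma: properties of regularity} alone is simply not enough.

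The paper bypasses this entirely by passing to the bigraded level. The long exact sequences assemble into an exact sequence $W\to U\to V\to W(1,0)$ of bigraded $\mathscr{S}_J$-modules, and because $W$ is finitely generated over the Noetherian ring $\mathscr{S}_J$, so are the images $C=\Image(W\to U)$ and $E=\Image(V\to W(1,0))$. Then $C^{\mathrm{even}}$ and $E^{\mathrm{even}}$ are finitely generated $\mathbb{Z}^3$-graded $S_J$-modules, and Theorem~\ref{thm:lin-bdd-multigrad} applies \emph{directly to them}, yielding uniform bounds $\reg(C_{(2i,n)})\le \rho_N(I)\,n - f\,i + e_C$ and $\reg(E_{(2i,n)})\le \rho_N(I)\,n - f\,i + e_E$. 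With these in hand, two applications of Lemma~\ref{lemma: properties of regularity} to the short exact sequences $0\to C_{(2i,n)}\to U_{(2i,n)}\to D_{(2i,n)}\to 0$ and $0\to D_{(2i,n)}\to V_{(2i,n)}\to E_{(2i,n)}\to 0$ finish the proof with no circularity. The missing idea in your proposal is precisely this: the image modules must be bounded using the global trigraded structure (Theorem~\ref{thm:lin-bdd-multigrad}), not by chaining the pointwise inequalities of Lemma~\ref{lemma: properties of regularity}.
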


\begin{proof}
	(i) Let $ J $ be a homogeneous $ N $-reduction ideal of $ I $ such that $ \rho_N(I) = d(J) $. Then $ J \subseteq I $ and $ I^{n+1}N = J I^n N $ for all $ n \ge n_0 $ for some $ n_0 \in \bN $. Hence
	\begin{equation}\label{eqn1-thm:main}
		I^n N = J^{n - n_0} I^{n_0} N \quad \mbox{for all } n \ge n_0.
	\end{equation}
	Suppose $ J $ is minimally generated by homogeneous elements of degree $ d_1,\ldots,d_b $ such that $ d_1 \ge \cdots \ge d_b $. Note that $ \rho_N(I) = d(J) = d_1 $. Since $ \{I_n\}_{n \ge 0} $ is a stable $ I $-filtration, $ J^m \left( I_n N \right) \subseteq I^m I_n N \subseteq I_{m+n} N $ for all $ m , n \in \bN $. Hence $ \cN := \bigoplus_{n \ge 0} I_n  N $ is a graded $ \sR(J) $-module. Moreover, there is $ n_1 $ such that $ I_n  = I^{n-n_1} I_{n_1} $ for all $ n \ge n_1 $. So, by \eqref{eqn1-thm:main}, we have
	\[
		I_n N = I^{n-n_1} I_{n_1} N = J^{n - n_0 - n_1} \left( I^{n_0} I_{n_1} N \right)
	\]
	for all $ n \ge n_0 + n_1 $, which yields that $ \cN = \bigoplus_{n \ge 0} I_n  N $ is a \fg graded $ \sR(J) $-module. Hence the inequality follows from Corollary~\ref{cor:lin-bdd-ext}.
	
	(ii) We show that there is some constant $ e_{02} $ such that
	\begin{equation}\label{eqn2-thm:main}
		\reg\left( \Ext_A^{2i}(M,N/I_nN) \right) \le \rho_N(I) \cdot n - f \cdot i + e_{02} \quad \mbox{for all }  i, n \ge 0.
	\end{equation}
	In a similar way, one can prove that there is some constant $ e_{12} $ such that
	\begin{equation*}
	\reg\left( \Ext_A^{2i+1}(M,N/I_nN) \right) \le \rho_N(I) \cdot n - f \cdot i + e_{12} \quad \mbox{for all }  i, n \ge 0.
	\end{equation*}
	To show \eqref{eqn2-thm:main}, we consider the exact sequences $ 0 \to I_n N \to N \to N/I_n N \to 0 $ for $ n \in \mathbb{N} $. These yield exact sequences of graded $ A $-modules:
	\begin{equation}\label{eqn3-thm:main}
		\Ext_A^i(M,I_n N) \rightarrow \Ext_A^i(M,N) \rightarrow \Ext_A^i(M,N/I_n N)	\rightarrow \Ext_A^{i+1}(M,I_n N).
	\end{equation}
	In view of \ref{para:mod-struc-3}, taking direct sum over $ i, n $ in \eqref{eqn3-thm:main}, and using the naturality of the Eisenbud operators $ t_j $, we obtain an exact sequence of bigraded $ \sS_J  = \sR(J)[t_1,\ldots,t_c] $-modules:
	\begin{align}\label{eqn4-thm:main}
		W \lra U \lra V \lra W(1,0), \quad \mbox{where} \quad W := \bigoplus_{i, n \ge 0} \Ext_A^i(M,I_n N),\\
		U := \bigoplus_{i, n \ge 0} \Ext_A^i(M, N) \quad \mbox{and} \quad V := \bigoplus_{i, n \ge 0} \Ext_A^i\left(M,\frac{N}{I_n N}\right). \nonumber
	\end{align}
	Note that $ W(1,0) $ is same as $ W $ but the grading is twisted by $ (1,0) $. Setting
	\[
		C := \Image(W \to U), ~ D := \Image(U \to V) \; \mbox{ and } \; E := \Image(V \to W(1,0)),
	\]
	we have the following short exact sequences of bigraded $ \sS_J $-modules:
	\begin{equation}\label{eqn5-thm:main}
		0 \lra C \lra U \lra D \lra 0 \quad \mbox{ and } \quad 0 \lra D \lra V \lra E \lra 0.
	\end{equation}
	Since $ W $ is a \fg $ \sS_J $-module, $ C $ and $ E $ are also \fg $ \sS_J $-modules. Moreover, in view of \ref{para:bigrad-ring} and \ref{para:Z3-grad-ring},
	\[
		C^{\rm even} = \bigoplus_{(i,n) \in \bN^2} C_{(2i,n)} \quad \mbox{ and } \quad E^{\rm even} = \bigoplus_{(i,n) \in \bN^2} E_{(2i,n)}
	\]
	are \fg $ \bZ^3 $-graded $ S_J $-modules, where
	\[
		C^{\rm even}_{(i,n,\star)} = C_{(2i,n)} \quad \mbox{ and } \quad E^{\rm even}_{(i,n,\star)} = E_{(2i,n)} \quad \mbox{for } (i,n) \in \bN^2.
	\]
	Hence, by virtue of Theorem~\ref{thm:lin-bdd-multigrad}, there exist some constants $ e_C $ and $ e_E $ such that
	\begin{align}
		&\reg\left(C_{(2i,n)}\right) \le \rho_N(I) \cdot n - f \cdot i + e_C \quad \mbox{for all } i, n \ge 0, \label{eqn6-thm:main} \\
		&\reg\left(E_{(2i,n)}\right) \le \rho_N(I) \cdot n - f \cdot i + e_E  \quad \mbox{for all } i, n \ge 0. \label{eqn7-thm:main}
	\end{align}
	The sequences \eqref{eqn5-thm:main} yield exact sequences of \fg graded $ A $-modules:
	\begin{align}
		& 0 \lra C_{(2i,n)} \lra U_{(2i,n)} \lra D_{(2i,n)} \lra 0 \quad \mbox{ and}\label{eqn8-thm:main} \\
		& 0 \lra D_{(2i,n)} \lra V_{(2i,n)} \lra E_{(2i,n)} \lra 0 \quad \mbox{for } (i,n) \in \bN^2. \label{eqn9-thm:main}
	\end{align}
	Considering the improper ideal $ A $ in (i), since $ \rho_N(A) = 0 $, there is some constant $ e_A $ such that $ \reg\left( \Ext_A^{2i}(M, N) \right) \le - f \cdot i + e_A $, i.e., 
	\begin{equation}\label{eqn10-thm:main}
		\reg\left( U_{(2i,n)} \right) \le - f \cdot i + e_A \mbox{ for all } i, n \ge 0.
	\end{equation}
	In view of \eqref{eqn8-thm:main}, by virtue of Lemma~\ref{lemma: properties of regularity}(iii), we get that
	\begin{align}
		\reg\left( D_{(2i,n)} \right) & \le \max\left\{ \reg\left( C_{(2i,n)} \right) - 1, \reg\left( U_{(2i,n)} \right) \right\} \label{eqn11-thm:main} \\
		& \le \rho_N(I) \cdot n - f \cdot i + e_D \quad \mbox{[by \eqref{eqn6-thm:main} and \eqref{eqn10-thm:main}]} \nonumber
	\end{align}
	for all $ i, n \ge 0 $, where $ e_D := \max\{ e_C - 1, e_A \} $. Hence
	\begin{align*}
		\reg & \left( \Ext_A^{2i}(M,N/I_n N) \right) = \reg\left( V_{(2i,n)} \right) \\
		& \le \max\left\{ \reg\left( D_{(2i,n)} \right), \reg\left( E_{(2i,n)} \right) \right\} \quad \mbox{[by \eqref{eqn9-thm:main} and Lemma~\ref{lemma: properties of regularity}(ii)]} \\
		& \le \rho_N(I) \cdot n - f \cdot i + e_{02} \quad \mbox{[by \eqref{eqn7-thm:main} and \eqref{eqn11-thm:main}]}
	\end{align*}
	for all $ i, n \ge 0 $, where $ e_{02} := \max\{e_D,e_E\} $. This completes the proof of (ii).
\end{proof}

 \section{Examples}\label{sec:examples}
 
 The following examples show that the bounds in Theorem~\ref{thm:main} are sharp. More precisely, the leading coefficients of the functions
 \[
 	\reg\big( \Ext_A^{2i+l}(M,I^nN) \big) \quad \mbox{and} \quad  \reg\big( \Ext_A^{2i+l}(M,N/I^nN) \big) \quad \mbox{for } l \in \{0,1\}
 \]
 can be exactly $ \rho_N(I) $ and $ - f $.

\begin{example}\label{example1}
	Let $ A = K[X]/(X^2) $, where $ K $ is a field and $ \deg(X) = 1 $. Suppose $ x $ is the residue of $ X $ in $ A $. Set $ M = N := A/(x) $, and $ I := A $. Then
	\[
		\reg\left( \Ext_A^{2i}(M,I^nN) \right) = - 2i \quad \mbox{and} \quad \reg\left( \Ext_A^{2i+1}(M,I^nN) \right) = - 2i - 1
	\]
	for all $ i, n \ge 0 $. In this case, note that $ \rho_N(I) = 0 $ and $ f = \deg(X^2) = 2 $.
\end{example}

\begin{proof}
	Consider the minimal graded free resolution
	\[
		\mathbb{F}_M : \quad \cdots \lra A(-3) \stackrel{x}{\lra} A(-2) \stackrel{x}{\lra} A(-1) \stackrel{x}{\lra} A \lra 0
	\]
	of $ M $ over $ A $. Since $ \Hom_A(A(-i),N) \cong N(i) $ for every $ i \ge 0 $, the complex $ \Hom_A(\mathbb{F}_M,N) $ is isomorphic to
	\[
		0 \lra N \stackrel{x}{\lra} N(1) \stackrel{x}{\lra} N(2) \stackrel{x}{\lra} N(3) \lra \cdots.
	\]
	Since $ x $ annihilates $ N $, the map $ N(i) \stackrel{x}{\lra} N(i+1) $ is a zero map for every $ i \ge 0 $. Hence $ \Ext_A^i(M,N) = N(i) $ for every $ i \ge 0 $. Therefore it follows from the fact
	\[
		N(i)_{\mu} = \left\{ \begin{array}{ll}
		0 & \mbox{if } \mu \neq - i \\
		N_0 = K \neq 0 & \mbox{if } \mu = -i \end{array} \right.
	\]
	that $ \reg\left( \Ext_A^i(M,N) \right) = - i $ for every $ i \ge 0 $.
\end{proof}
%

\begin{example}\label{example2}
	Let $ A = K[X,Y]/(X^2,Y^3) $, where $ K $ is a field and $ \deg(X) = \deg(Y) = 1 $. Suppose $ x, y $ are the residues of $ X,Y $ in $ A $ respectively. Set $ M = N := A/(y) $, and $ I := A $. Then
	\[
		\reg\left( \Ext_A^{2i}(M,I^nN) \right) = - 3i + 1 \quad \mbox{and} \quad \reg\left( \Ext_A^{2i+1}(M,I^nN) \right) = - 3 i
	\]
	for all $ i, n \ge 0 $. Note that $ \rho_N(I) = 0 $ and $ f = \min\{ \deg(X^2), \deg(Y^3) \} = 2 $.
\end{example}

\begin{proof}
	Consider the minimal graded free resolution $ \mathbb{F}_M : $
	\[
		\cdots \lra A(-7) \stackrel{y}{\lra} A(-6) \stackrel{y^2}{\lra} A(-4) \stackrel{y}{\lra} A(-3) \stackrel{y^2}{\lra} A(-1) \stackrel{y}{\lra} A \lra 0
	\]
	of $ M $ over $ A $. The complex $ \Hom_A(\mathbb{F}_M,N) $ is isomorphic to
	\[
		0 \lra N \stackrel{y}{\lra} N(1) \stackrel{y^2}{\lra} N(3) \stackrel{y}{\lra} N(4) \stackrel{y^2}{\lra} N(6) \stackrel{y}{\lra} N(7) \lra \cdots.
	\]
	Since $ y $ annihilates $ N $, all the maps in $ \Hom_A(\mathbb{F}_M,N) $ are zero maps. Hence $ \Ext_A^{2i}(M,N) = N(3i) $ and $ \Ext_A^{2i+1}(M,N) = N(3i+1) $ for every $ i \ge 0 $. Since
	\[
		N(m) \cong (K \oplus Kx)(m) \cong (K \oplus K(-1))(m) \cong K(m) \oplus K(m-1),
	\]
	$ \reg(N(m)) = - m + 1 $. Hence the assertion follows.
\end{proof}

\begin{remark}\label{example2a}
	With $ A $, $ M $ and $ N $ as in Example~\ref{example2}, one can easily compute that $ \Tor_{2i}^A(M,N) = N(-3i) $ and $ \Tor_{2i+1}^A(M,N) = N(-3i-1) $ for all $ i \ge 0 $. Hence
	\[
		\reg\left( \Tor_{2i}^A(M,N) \right) = 3i + 1 \quad \mbox{and} \quad \reg\left( \Tor_{2i+1}^A(M,N) \right) = 3i + 2 \quad \mbox{for all $ i \ge 0 $}.
	\]
	By setting $ U = V := A/(x) $, it can be easily obtained that $ \Tor_i^A(U,V) = V(-i) $ for every $ i \ge 0 $. Since
	\[
		V(-i) \cong (K \oplus K y \oplus K y^2)(-i) \cong K(-i) \oplus K(-i-1) \oplus K(-i-2),
	\]
	we have $ \reg(\Tor_i^A(U,V)) = \reg(V(-i)) = i + 2 $ for every $ i \ge 0 $. Therefore
	\[
		\reg\left( \Tor_{2i}^A(U,V) \right) = 2i + 2 \quad \mbox{and} \quad \reg\left( \Tor_{2i+1}^A(U,V) \right) = 2i + 3 \quad \mbox{for all $ i \ge 0 $}.
	\]
\end{remark}

\begin{example}\label{example3}
	Assume $ A = K[X,Y]/(XY) $, where $ K $ is a field, and $ \deg(X) = \deg(Y) = 1 $. Suppose $ x, y $ are the residues of $ X,Y $ in $ A $ respectively. Set $ M := A/(x) $, $ N := (x) $ and $ I := (x) $. Then
	\begin{enumerate}[(i)]
		\item $ \reg\left( \Ext_A^{2i}(M,I^nN) \right) = - \infty $ and $ \reg\left( \Ext_A^{2i+1}(M,I^nN) \right) = n - 2i $ for $ i, n \ge 0 $.
		\item $ \reg\left( \Ext_A^{2i}(M,N/I^nN) \right) = n - 2i $ and $ \reg\left( \Ext_A^{2i+1}(M,N/I^nN) \right) = - 2i $ for all $ i \ge 0 $ and $ n \ge 1 $.
	\end{enumerate}
	In this case, note that $ \rho_N(I) = 1 $ and $ f = \deg(XY) = 2 $.
\end{example} 	

\begin{proof}
	Consider the minimal graded free resolution
	\[
		\mathbb{F}_M : \quad \cdots \stackrel{y}{\lra} A(-3) \stackrel{x}{\lra} A(-2) \stackrel{y}{\lra} A(-1) \stackrel{x}{\lra} A \lra 0
	\]
	of $ M $ over $ A $. For a graded $ A $-module $ L $, the complex $ \Hom_A(\mathbb{F}_M, L) $ is isomorphic to
	\[
		0 \lra L \stackrel{x}{\lra} L(1) \stackrel{y}{\lra} L(2) \stackrel{x}{\lra} L(3) \stackrel{y}{\lra} \cdots.
	\]
	
	(i) Suppose $ L = I^n N $. Then $ L = (x^{n+1}) $. Moreover,
	\begin{align*}
		&\Ker\left( L(2i) \stackrel{x}{\lra} L(2i+1) \right) = 0, \quad \Image\left( L(2i) \stackrel{x}{\lra} L(2i+1) \right) = (xL)(2i+1) \\
		&\mbox{and} \quad \Ker\left( L(2i+1) \stackrel{y}{\lra} L(2i+2) \right) = L(2i+1) \quad \mbox{for every } i \ge 0.
	\end{align*}
	Therefore $ \Ext_A^{2i}(M,I^nN) = 0 $ and
	 \begin{align*}
	 	\Ext_A^{2i+1}(M,I^nN) & = \frac{L(2i+1)}{(xL)(2i+1)} \cong \left(\frac{L}{xL}\right)(2i+1) \cong \left( K x^{n+1} \right)(2i+1) \\
	 	& \cong K(-n-1)(2i+1) = K(-n+2i)
	 \end{align*}
	 for all $ i,n \ge 0 $. Hence the assertion (i) follows.
	 
	 (ii) Let $ W = N/I^n N $. Then $ W = (x)/(x^{n+1}) \cong Kx \oplus Kx^2 \oplus \cdots \oplus Kx^n $. Moreover,
	 \begin{align*}
	 &\Ker\left( W(2i) \stackrel{x}{\lra} W(2i+1) \right) = (Kx^n)(2i) \cong K(-n+2i),\\
	 & \Image\left( W(2i) \stackrel{x}{\lra} W(2i+1) \right) = (xW)(2i+1) \cong (Kx^2 \oplus \cdots \oplus Kx^n)(2i+1) \\
	 &\mbox{and} \quad \Ker\left( W(2i+1) \stackrel{y}{\lra} W(2i+2) \right) = W(2i+1) \quad \mbox{for every } i \ge 0.
	 \end{align*}
	 Therefore $ \Ext_A^{2i}(M,N/I^nN) \cong K(-n+2i) $ and
	 \begin{equation*}
	 	\Ext_A^{2i+1}(M,N/I^nN) = \frac{W(2i+1)}{(xW)(2i+1)} \cong (Kx)(2i+1) \cong K(2i)
	 \end{equation*}
	 for all $ i \ge 0 $ and $ n \ge 1 $. Hence the assertion (ii) follows.
\end{proof}

\begin{remark}\label{example4b}
	With $ A $, $ M $ and $ N $ as in Example~\ref{example3}, we have
	\[
		\reg\left( \Tor_{2i}^A(M,N) \right) = 2i+1 \quad \mbox{and} \quad \reg\left( \Tor_{2i+1}^A(M,N) \right) = 0 \quad \mbox{for all $ i \ge 0 $}.
	\]
\end{remark}

\begin{proof}
	Considering the minimal graded free resolution $ \mathbb{F}_M $ of $ M $ over $ A $, we have
	\[
		\mathbb{F}_M \otimes_A N : \quad \cdots \stackrel{y}{\lra} N(-3) \stackrel{x}{\lra} N(-2) \stackrel{y}{\lra} N(-1) \stackrel{x}{\lra} N \lra 0.
	\]
	Therefore, for every $ i \ge 0 $,
	\begin{align*}
		\Tor_{2i}^A(M,N) & =  \dfrac{\Ker(N(-2i) \stackrel{y}{\lra} N(-2i+1))}{\Image( N(-2i-1) \stackrel{x}{\lra} N(-2i) )} = \dfrac{N(-2i)}{(xN)(-2i)} \\
		& \cong (Kx)(-2i) \cong K(-2i-1) \\
	   	\mbox{and} \quad \Tor_{2i+1}^A(M,N) & = \Ker\left( N(-2i-1) \stackrel{x}{\lra} N(-2i) \right) = 0.
	\end{align*}
	Hence the assertion follows.
\end{proof}

We conclude our paper by stating the following natural question.

\begin{question}
	For $ \ell \in \{0,1\} $, do there exist $ a_\ell , a'_\ell \in \mathbb{Z}_{>0} $ and $ e_\ell , e'_\ell \in \mathbb{Z} \cup \{ -\infty \} $ such that
	\begin{enumerate}[{\rm (i)}]
		\item $ \reg\left(\Ext_A^{2i+\ell}(M,N)\right) = - a_\ell \cdot i + e_\ell $ for all $ i \gg 0 $ ?
		\item $ \reg\left(\Tor_{2i+\ell}^A(M,N)\right) = a'_\ell \cdot i + e'_\ell $ for all $ i \gg 0 $ ?
	\end{enumerate}
\end{question}

%

\section*{Acknowledgements}
Ghosh was supported by DST, Government of India under the DST-INSPIRE Faculty Scheme with Award Reg. No. DST/INSPIRE/04/2016/000587.


\begin{thebibliography}{AAAA}
	
	\bibitem[BS13]{BS13} M. P. Brodmann and R. Y. Sharp, {\it Local Cohomology: An Algebraic Introduction with Geometric Applications}, Cambridge studies in advanced mathematics {\bf 136}, Second Edition, Cambridge University Press, Cambridge, 2013.
	
	\bibitem[BC17]{BC17} W.~Bruns and A.~Conca, {\it A remark on regularity of powers and products of ideals}, J. Pure Appl. Algebra {\bf 221} (2017), 2861--2868.
	
	\bibitem[CD08]{CD08} M.~Chardin and K.~Divaani-Aazar, {\it Generalized local cohomology and regularity of Ext modules}, J. Algebra {\bf 319} (2008), 4780--4797.
	
	\bibitem[CHH11]{CHH11} M.~Chardin, D.T.~Ha and L.T.~Hoa, {\it Castelnuovo-Mumford regularity of Ext modules and homological degree}, Trans. Amer. Math. Soc. {\bf 363} (2011), 3439--3456. 
	
%
	\bibitem[CHT99]{CHT99} S. D. Cutkosky, J. Herzog and N. V. Trung, {\it Asymptotic behaviour of the Castelnuovo-Mumford regularity}, Compositio Math. {\bf 118} (1999), 243--261.
	
	\bibitem[Eis80]{Eis80} D.~Eisenbud, {\it Homological algebra on a complete intersection, with an application to group representations}, Trans. Amer. Math. Soc. {\bf 260} (1980), 35--64.
	
	\bibitem[EHU06]{EHU06} D.~Eisenbud, C.~Huneke and B.~Ulrich, {\it The regularity of Tor and graded Betti numbers} Amer. J. Math. {\bf 128} (2006), 573--605.

	\bibitem[G16a]{Gho16a} D.~Ghosh, {\it Asymptotic linear bounds of Castelnuovo--Mumford regularity in multigraded modules}, J. Algebra {\bf 445} (2016), 103--114.
	
	\bibitem[G16b]{Gho16b} D.~Ghosh, {\it Asymptotic prime divisors related to Ext, regularity of powers of ideals, and Syzygy modules}, Ph.D Thesis, Indian Institute of Technology Bombay, 2016, \href{https://arxiv.org/pdf/1709.05881.pdf}{arXiv:1709.05881}.
	
%
	\bibitem[Gul74]{Gul74} T. H. Gulliksen, {\it  A change of ring theorem with applications to Poincar\'{e} series and intersection multiplicity}, Math. Scand. {\bf 34} (1974), 167--183.
%
	
	\bibitem[Kod00]{Kod00} V.~Kodiyalam, {\it Asymptotic behaviour of Castelnuovo-Mumford regularity}, Proc. Amer. Math. Soc. {\bf 128} (2000), 407--411.
	
	\bibitem[Put13]{Put13} T. J. Puthenpurakal, {\it On the finite generation of a family of Ext modules}, Pacific J. Math. {\bf 266} (2013), 367--389.
	
%
	\bibitem[TW05]{TW05} N. V. Trung and H.-J. Wang, {\it On the asymptotic linearity of Castelnuovo-Mumford regularity},  J. Pure Appl. Algebra {\bf 201} (2005), 42--48.
	
\end{thebibliography}
\end{document}